\newcommand{\F}{\mathcal{F}}
\newcommand{\calF}{\mathcal{F}}
\newcommand{\calS}{\mathcal{S}}
\newcommand{\calJ}{\mathcal{J}}
\newcommand{\pS}{\partial \mathcal{S}}
\newcommand{\R}{\mathbb{R}}
\newcommand{\DIV}{\textnormal{div}\,}
\newcommand{\eps}{\varepsilon}
\newcommand{\bbR}{\mathbb{R}}
\newcommand{\bbI}{\mathbb{I}}
\newcommand{\lra}{\longrightarrow}
\def\longrightharpoonup{\relbar\joinrel\rightharpoonup}
\def\cv{\stackrel{w}{\longrightharpoonup}}
\newtheorem{Theorem}{Theorem}
\newtheorem{Definition}{Definition}
\newtheorem{Proposition}{Proposition}
\newtheorem{Lemma}{Lemma}
\begin{document}
	
\date{\today}
\title{On the velocity of a small rigid body in a viscous incompressible fluid in dimension two and three}
\author{Marco Bravin \footnote{Delft Institute of Applied Mathematics, Delft University of Technology, Mekelweg 4
2628 CD Delft, The Netherlands }, \v{S}\'arka Ne\v{c}asov\'a\footnote{Institute of Mathematics, Czech Academy of Sciences \v{Z}itn\'a 25, 115 67 Praha 1, Czech Republic.}}

\maketitle
	
\begin{abstract}
In this paper we study the evolution of a small rigid body in a viscous incompressible fluid, in particular we show that a small particle is not accelerated by the fluid in the limit when its size converges to zero under a lower bound on its mass. This result is based on a new a priori estimate on the velocities of the centers of mass of rigid bodies that holds in the case when their masses are also allowed to decrease to zero. 
\end{abstract}

\section{Introduction}
	
In the recent works \cite{He:2D} and \cite{He:3D}, the authors showed that the presence of a small rigid body is negligible in a viscous incompressible fluid. In this paper we study the trajectory of this small object. In particular we will show that under some constrain on the mass, the small rigid body is not influenced by the fluid, in particular it will not be accelerated and will move with its initial velocity.  

Let start by introducing the equations that describe  the dynamic of the system. For $ d = 2, 3 $, let denote by $ \calS(t) \subset \bbR^{d} $  the position of the rigid body at time $ t \in \bbR^{+} = [0,+\infty).$  The fluid then occupies the domain $\calF(t) = \bbR^{d} \setminus \calS(t) $ and it is modelled by the Navier-Stokes equations

\begin{align}
\partial_t u +  u \cdot \nabla  u  - \nu \Delta u + \nabla p & \, =  0   \quad \quad && \text{ for } x \in \F(t), \nonumber \\
\DIV(u) & \, =  0 \quad \quad &&  \text{ for } x \in \calF(t),  \label{equ:fluid} \\
u & \, = u_{\calS} \quad \quad  && \text{ for } x \in \pS(t), \nonumber \\
u & \, \longrightarrow 0 && \text{ as } |x| \longrightarrow +\infty, \nonumber
\end{align}  
where $ u: \bbR^+ \times \calF(t) \lra \bbR^{d} $ is the velocity field and $ p: \bbR^+ \times  \calF(t) \lra \bbR $ is the pressure which is a scalar quantity. The real number $ \nu > 0 $ is the viscosity coefficient. Finally $ u_{\calS} $ is the velocity of the rigid body.      

Regarding the rigid body we assume that  it occupies $ \calS(0) = \calS^{in}  $  a closed, connected, simply connected subset of $ \bbR^{d} $ with no empty interior and  smooth boundary  and that it has density $ \rho^{in} : \calS^{in} \lra \bbR $ such that $ \rho^{in} > 0 $.  The dynamic of $ \calS(t) $ is completely determined by the evolution of the center of mass $ h(t) $ and the angular rotation $ Q(t) $ around the center of mass. More precisely 
\begin{equation*}
\calS(t) = \left\{ x \in \bbR^{d} \text{ such that }   Q^T(t)(x-h(t)) \in \calS^{in} \right\}.
\end{equation*}  
Here for a matrix $ A $ we denote by $ A^T $ its transpose. The density of the rigid body $ \calS(t) $ is given by $ \rho(t,x) = \rho^{in} (Q^T(t)(x-h(t)) ) $ and its velocity $ u_{\calS} : \bbR^{+}\times \calS(t) \lra \bbR^d$  is
\begin{align*}
u_{\calS}(t,x) = \frac{d}{dt}\left(   h(t) + Q(t) y   \right)\Big|_{y = Q^T(t)(x- h(t)) } = h'(t) + Q'(t)Q^T(t)(x-h(t)).
\end{align*}
The matrix $ Q(t) $ is a rotation, we deduce that $ Q'(t)Q^T(t) $ is skew-symmetric and can be identify in dimension three with a vector $ \omega(t) $ in the following way
\begin{equation*}
 Q'(t)Q^T(t) x = \omega(t) \times  x
\end{equation*} 
for any $ x \in \bbR^3 $. We will call $ \omega $ to the angular velocity. If we denote by $ \ell(t) = h'(t) $ the velocity of the center of mass the  solid velocity is $$ u_{\calS}(t,x) = \ell(t) + \omega(t) \times (x-h(t))  .$$ 

 Let recall that the mass $ m$ and the center of mass $ h $ is defined as
\begin{equation*} 
m = \int_{\calS(t) } \rho(t,x) \, dx \quad \text{ and } \quad h(t) = \frac{1}{m}\int_{\calS(t)} \rho(t,x) x \, dx  
\end{equation*}
and without loss of generality we assume $ h(0) = 0 $ and $ Q(0) = 0 $. The evolution of the center of mass $ h(t) $ and $ Q(t) $ follows the Newtons laws  that writes
\begin{align}
m \ell'(t)  = - \oint_{\pS(t)} \Sigma(u, & p)   nds,   \label{equ:rigid:body} \\
\calJ(t) \omega'(t) = \calJ(t) \omega(t) \times \omega(t) - & \oint_{\pS(t)} (x-h(t))\times \Sigma(u,p) n ds, \nonumber 
\end{align}
where $ n $ is the normal component to $ \partial \calF(t) $ exiting from the fluid domain, the inertia matrix $ \calJ(t) $ is defined as
\begin{equation*}
\calJ(t) = \int_{\calS(t)}  \rho(t,x) \left[ |x-h(t)|^2 \bbI -(x-h(t))\otimes (x-h(t))\right] \, dx  
\end{equation*} 
where $ \bbI $ is the identity matrix of $ d $ dimensions. Finally the stress tensor
\begin{equation*}
\Sigma(u,p) = 2 \nu D(u) - p \bbI \quad \text{ where } \quad D(u) = \frac{\nabla u + (\nabla u)^T }{2}
\end{equation*}
is the simmetric gradient.

For the system \eqref{equ:fluid}-\eqref{equ:rigid:body} the initial conditions are 
\begin{equation}
\label{equ:in:cond}
u(0,.) = u^{in} \quad \text{ in } \calF(t), \quad \ell(0) = \ell^{in} \quad \text{ and } \quad \omega(0) = \omega^{in}.
\end{equation}
where $  u^{in} $ is the initial velocity, $ \ell^{in} \in \bbR^{d} $ and $ \omega^{in} \in \bbR^{2d-3} $. Moreover they satisfy the compatibility conditions 
\begin{equation}
\label{comp:cond}
 \DIV(u^{in}) = 0 \quad \text{ in } \calF(0) \quad \text{ and } \quad u^{in} \cdot n = \ell^{in} + \omega^{in} \times x \quad \text{ on } \partial \calS(0).  
\end{equation}
Notice that in the case of dimension two the matrix $  Q'(t)Q^T(t) $ can be identify with a scalar quantity that we denote again by $ \omega $ and the $ u_{\calS}(t,x) = \ell(t) + \omega(t) (x-h(t))^{\perp}  $, where for $ x \in \bbR^2 $ we denote by $ x^{\perp} = (-x_2, x_1 )^T $. Moreover the inertia matrix $ \calJ $ becomes a scalar independent of time and the second equation of  \eqref{equ:rigid:body} simplifies. 

Let recall that the system \eqref{equ:fluid}-\eqref{equ:rigid:body}-\eqref{equ:in:cond} has been widely studied in the literature. In fact the first works on the existence of Hopf-Leray type weak solutions  are \cite{Jud74} and  \cite{Serre} where they consider the case $ \Omega = \bbR^3 $. These results were then extended in \cite{GLS00}-\cite{CST}-\cite{DE}-\cite{Fei}-\cite{Fei1}.  Uniqueness was shown in \cite{GS15}  in dimension two and  in \cite{MNR} in dimension three under Prodi-Serrin conditions. Regularity was studied in dimension three under Prodi-Serrin conditions in \cite{MNR1}.  Well-posedness of strong solutions in Hilbert space setting was proved in \cite{GM}-\cite{Tak}-\cite{TT} and in the Banach space setting in \cite{GGH}-\cite{MT}. Notice that similar results holds in the case the Navier-Slip boundary conditions are prescribed on $\partial \calS $, see \cite{PS}-\cite{GH}-\cite{IO1}-\cite{BGMN}. See also the problem of weak-strong uniqueness \cite{CNM}.

Among all the different types of solutions, in this work we consider Hopf-Leray type weak solutions for the system \eqref{equ:fluid}-\eqref{equ:rigid:body}-\eqref{equ:in:cond}. To recall this definition let introduce some notations from \cite{He:3D}. Let denote by 
\begin{equation}
 \tilde{\rho} = \chi_{\calF(t)} + \rho\chi_{\calS(t)}
\end{equation}
which is the extension by $ 1 $ of the density of the rigid body. Here for a set $ A \subset \bbR^{d} $, we denote by $ \chi_{A} $ the indicator function of $ A $, more precisely $ \chi_A(x) = 1 $ for $ x \in A $ and $ 0 $ elsewhere. Similarly we define the global velocity 
\begin{equation*}
\tilde{u} = u \chi_{\calF(t)} +u_{\calS} \chi_{\calS(t)} = u \chi_{\calF(t)} + \left(\ell(t) + \omega \times (x-h(t)) \right) \chi_{\calS(t)}.
\end{equation*}
Notice that if $ \tilde{u}^{in} \in L^{2}(\calF(0))$, then the compatibility conditions \eqref{comp:cond} on the initial data imply that $ \DIV( \tilde{u}^{in}) = 0 $ in an appropriate weak sense. After all this preliminary we introduce the definition of Hopf-Leray type weak solutions for the system \eqref{equ:fluid}-\eqref{equ:rigid:body}-\eqref{equ:in:cond}.

\begin{Definition}

Let $ \calS^{in} $ and $ \rho^{in} $ the initial position and density of the rigid body , let $ (u^{in}, \ell^{in}, \omega^{in}) $ satisfying the hypothesis \eqref{comp:cond} and such that $ \tilde{u}^{in } \in L^2(\bbR^d) $. Then a triple $ ( u, \ell, \omega ) $ is a Hopf-Leray weak solution for the system \eqref{equ:fluid}-\eqref{equ:rigid:body}-\eqref{equ:in:cond} associated with the initial data  $ \calS^{in} $, $ \rho^{in} $, $ u^{in} $, $\ell^{in}$ and $ \omega^{in} $, if 
\begin{itemize}

\item the functions $ u $, $ \ell $ and $ \omega $ satisfy 
\begin{gather*}
\ell \in L^{\infty}(\bbR^+;\bbR^d), \quad \omega \in L^{\infty}(\bbR^+; \bbR^{2d-3}) \\
u \in L^{\infty }(\bbR^+;L^{2}(\calF(t))) \cap L^2_{loc}(\bbR^+; H^{1}(\calF(t))), \quad \text{and} \quad \tilde{u} \in C_w(\bbR^+; L^2(\bbR^d));
\end{gather*}

\item the vector field $ \tilde{u} $ is divergence free in $ \bbR^d $ with $ D(\tilde{u}) = 0 $ in $ \calS(t) $; 

\item the vector field $ \tilde{u} $ satisfies the equation in the following sense:
\begin{equation}
\label{weak:form}
-\int_{\bbR^+} \int_{\bbR^d} \tilde{\rho} \tilde{u} \cdot (\partial_t \varphi + \tilde{u} \cdot \nabla \varphi ) - 2 \nu D(\tilde{u}): D(\varphi) \, dx \, dt = \int_{\bbR^d} \tilde{\rho}^{in} \tilde{u}^{in} \cdot \varphi(0,.) \, dt,
\end{equation}
for any test function $ \varphi \in C^{\infty}_{c}(\bbR^+ \times \bbR^d ) $ such that $ \DIV(\varphi) = 0 $ and $ D(\varphi) = 0 $ in $ \calS(t )$. 

\item The following energy inequality holds
\begin{equation}
\label{ene:ineq}
\int_{\bbR^{d}} \tilde{\rho}(t,.) |\tilde{u}(t,.)|^2 \, dx + 4 \nu \int_{0}^t \int_{\bbR^d} |D(\tilde{u})| ^2\, dx dt \leq \int_{\bbR^d} \tilde{\rho}|\tilde{u}^{in}|^2,
\end{equation}
for almost any time $ t \in \bbR^+ $.

\end{itemize}

\end{Definition}

There exists of weak solution for the system \eqref{equ:fluid}-\eqref{equ:rigid:body}-\eqref{equ:in:cond} is now classical and can be find for example in \cite{Fei}.

\begin{Theorem}
For initial data  $ \calS^{in} $, $ \rho^{in} $, $ u^{in} $, $\ell^{in}$ and $ \omega^{in} $ satisfying the hypothesis \eqref{comp:cond} and such that $ \tilde{u}^{in } \in L^2(\bbR^d) $, there exist a Hopf-Leray weak solution $ ( u, \ell, \omega ) $ of the system \eqref{equ:fluid}-\eqref{equ:rigid:body}-\eqref{equ:in:cond}.
\end{Theorem}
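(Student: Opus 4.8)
The plan is to construct weak solutions by an approximation–compactness argument of the type standard for fluid–rigid body systems. The essential difficulty is that both the geometry $\calS(t)$ and the class of admissible test functions (those with $D(\varphi)=0$ on $\calS(t)$) depend on the unknown motion, so the scheme must decouple the transport of the body from the resolution of the momentum balance. First I would regularize and decouple: on a short time interval, given an approximate rigid velocity $(\ell,\omega)$, I transport the body by the associated rigid motion to obtain $h$, $Q$, hence $\calS(t)$ and $\tilde{\rho}$, and then solve the momentum balance on this frozen geometry. The rigidity constraint $D(\tilde{u})=0$ on $\calS(t)$ is enforced either by restricting to the closed subspace of divergence-free $H^1$ fields that are rigid on $\calS(t)$ (a Galerkin basis of such fields) or by adding a penalization $\eps^{-1}\chi_{\calS(t)}|D(\tilde{u})|^2$ to the dissipation. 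A fixed-point argument (Schauder, using the compactness furnished by the energy bound) closes the coupling on each small interval, and the pieces are glued together; note that the whole-space setting frees us from collisions with fixed boundaries, and the bounded velocities prevent escape to infinity in finite time.

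Next I would derive the a priori estimates. Testing the regularized equation against $\tilde{u}$ and using that rigid motions are volume preserving with $D(\tilde{u})=0$ on the solid yields the energy balance, and hence the uniform bounds $\tilde{u}\in L^{\infty}(\bbR^+;L^2(\bbR^d))$ and $D(\tilde{u})\in L^2_{loc}(\bbR^+;L^2(\bbR^d))$; by Korn's inequality on the fluid domain this gives $u\in L^2_{loc}(\bbR^+;H^1(\calF(t)))$. These estimates control $\ell$ and $\omega$ in $L^{\infty}(\bbR^+)$, so the geometric quantities $h$ and $Q$ are uniformly Lipschitz in time. Arzelà–Ascoli then extracts uniformly convergent subsequences for $h$ and $Q$, giving convergence of the characteristic functions $\chi_{\calS(t)}$ and of $\tilde{\rho}$.

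Then I would pass to the limit. The uniform bounds together with the equation give a bound on $\partial_t(\tilde{\rho}\tilde{u})$ in a negative-order Sobolev space, so by the Aubin–Lions–Simon lemma $\tilde{u}$ converges strongly in $L^2_{loc}(\bbR^+\times\bbR^d)$, which is exactly what is needed to pass to the limit in the convective term $\tilde{\rho}\,\tilde{u}\otimes\tilde{u}$ in \eqref{weak:form}. The linear viscous term passes by weak convergence of $D(\tilde{u})$, and the penalization (respectively the Galerkin constraint) forces $D(\tilde{u})=0$ on $\calS(t)$ in the limit, recovering the rigidity. Taking test functions rigid on the limit geometry and approximating them by fields rigid on the approximate geometry (using the strong convergence of the motion) yields \eqref{weak:form}, while the energy inequality \eqref{ene:ineq} follows from weak lower semicontinuity of the dissipation and convergence of the initial data.

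The main obstacle is precisely this solution-dependent constraint space: one must show that every test function admissible for the limit problem can be approximated by test functions admissible for the approximate problems, which requires the strong convergence of the rigid motion $(h,Q)$ together with a careful transport of the test functions by the approximate rigid displacement. Controlling the convective term on the moving domain and ensuring the compatibility conditions \eqref{comp:cond} persist in the limit are the remaining technical points, but they are routine once the strong convergence of $\tilde{u}$ and of the geometry is in hand.
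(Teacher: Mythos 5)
The paper does not prove this theorem at all: it simply cites the classical literature (\cite{Fei}, see also \cite{Jud74}, \cite{Serre}, \cite{CST}, \cite{DE}, \cite{GLS00}) for the existence of Hopf--Leray weak solutions. Your outline --- decoupling the rigid transport from the momentum balance, enforcing the rigidity constraint by penalization or by a Galerkin basis of rigid-on-$\calS(t)$ divergence-free fields, closing with the energy estimate, Korn, Aubin--Lions--Simon compactness, and approximation of test functions rigid on the limiting geometry --- is essentially the standard argument carried out in those references, so it is consistent with the proof the paper points to.
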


Let now introduce a small parameter $ \eps > 0 $ that control the size of the rigid body. We will consider initial rigid body of the size $ \calS^{in}_{\eps} \subset B_{0}(\eps) $. In \cite{He:2D} and \cite{He:3D} the authors studied the limit as $ \eps $ goes to zero for solutions of the system \eqref{equ:fluid}-\eqref{equ:rigid:body}-\eqref{equ:in:cond} under some mild assumption on the initial data $ \rho^{in}_{\eps} $, $ u^{in}_{\eps} $, $\ell^{in}_{\eps} $ and $ \omega^{in}_{\eps} $ and in particular they show that in the limit the the presence of the rigid body does not influence the limit dynamics.  These results can be resume as follow. 

\begin{Theorem}[Th. 3 of \cite{He:2D} and Th. 2 of \cite{He:3D}.]
\label{theo:he}
Let $ (u_{\eps}, \ell_{\eps}, \omega_{\eps}) $ a sequence of Hopf-Leray solutions associated with the initial data $ \calS^{in}_{\eps} $, $ \rho^{in}_{\eps} $, $ u^{in}_{\eps} $, $\ell^{in}_{\eps} $ and $ \omega^{in}_{\eps} $  satisfying the hypothesis \eqref{comp:cond} and such that $ \tilde{u}^{in }_{\eps} \in L^2(\bbR^d) $. If we assume that 
\begin{itemize}

\item the rigid body $ \calS^{in}_{\eps} \subset B_{0}(\eps) $;

\item the mass of the rigid body $ m_{\eps}/ \eps^{d} \lra +\infty $;

\item the initial velocity $ \tilde{u}^{in}_{\eps} \lra u^{in} $ in $ L^{2}(\bbR^{d}) $ and $ m_{\eps} |\ell_{\eps}|^{2} + (\calJ_{\eps} \omega_{\eps}^{in}) \cdot \omega_{\eps}^{in} \lra2 E $;

\end{itemize}  
Then up to subsequence 
\begin{equation*}
\tilde{u}_{\eps} \cv u \quad \text{ weak-}\star \text{ in } L^{\infty}(\bbR^+; L^2(\bbR^d))   \text{ and weak  in }  L^{2}_{loc}(\bbR^+; H^1(\bbR^d))
\end{equation*}
where $ u $ is a distributional solution to the Navier-Stokes equations that satisfies the energy inequality
\begin{equation}
\label{ene:ineq:plus:extra}
\int_{\bbR^{d}} | u(t,.)|^2 \, dx + 4 \nu \int_{0}^t \int_{\bbR^d} |D(u)| ^2\, dx dt \leq \int_{\bbR^d} |u^{in}|^2 + E.
\end{equation}

\end{Theorem}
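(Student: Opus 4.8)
The plan is to let $\eps\to0$ in the global weak formulation \eqref{weak:form} by a compactness argument and then to recover \eqref{ene:ineq:plus:extra} by weak lower semicontinuity; the role of the hypothesis $m_\eps/\eps^{d}\to+\infty$ is to render the contribution of the shrinking but very dense rigid body negligible in the velocity field, while letting its kinetic energy survive as the excess term. First I would collect the uniform bounds. Applying \eqref{ene:ineq} for each $\eps$ gives $\tilde u_\eps$ bounded in $L^\infty(\bbR^+;L^2(\bbR^d))$ and $D(\tilde u_\eps)$ bounded in $L^2(\bbR^+\times\bbR^d)$, which Korn's inequality upgrades to a uniform bound of $\tilde u_\eps$ in $L^2_{loc}(\bbR^+;H^1)$. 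The right-hand side of \eqref{ene:ineq} splits into a fluid part tending to $\int_{\bbR^d}|u^{in}|^2$ (using $\tilde u^{in}_\eps\to u^{in}$ in $L^2$ and $|\calS^{in}_\eps|\to0$) and a solid part $m_\eps|\ell^{in}_\eps|^2+(\calJ_\eps\omega^{in}_\eps)\cdot\omega^{in}_\eps$ that converges by the third hypothesis. The same splitting of the rigid kinetic energy, with $\calJ_\eps$ comparable to $m_\eps\eps^2$, yields the a priori bound $|\ell_\eps|^2+\eps^2|\omega_\eps|^2\lesssim E/m_\eps$, whence $\int_{\calS_\eps(t)}|\tilde u_\eps|^2\lesssim\eps^{d}E/m_\eps\to0$: the body is invisible in $L^2$ precisely because $m_\eps/\eps^d\to+\infty$.

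These bounds give, up to a subsequence, $\tilde u_\eps\cv u$ weak-$\star$ in $L^\infty(L^2)$ and weakly in $L^2_{loc}(H^1)$. To pass to the limit in the nonlinearity $\tilde\rho_\eps\tilde u_\eps\otimes\tilde u_\eps$ I would establish strong convergence of $\tilde u_\eps$ in $L^2_{loc}(\bbR^+\times\bbR^d)$ by an Aubin--Lions--Simon argument, combining the spatial $H^1$-bound with a uniform bound on $\partial_t(\tilde\rho_\eps\tilde u_\eps)$ in some $L^2_{loc}(H^{-s})$ read off from \eqref{weak:form}. A parallel ingredient is the construction of admissible test functions: given $\varphi\in C^\infty_c$ with $\DIV\varphi=0$, I build $\varphi_\eps$ equal to $\varphi$ away from $\calS_\eps(t)$, rigid on $\calS_\eps(t)$, still divergence free, and with $\varphi_\eps\to\varphi$; since $\calS_\eps(t)$ shrinks to a point this is a localized correction whose effect vanishes, and it is this device that couples \eqref{weak:form} for each $\eps$ to the target weak formulation tested against arbitrary divergence-free $\varphi$.

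With strong convergence at hand I pass to the limit term by term. Writing $\tilde\rho_\eps=1+(\rho_\eps-1)\chi_{\calS_\eps(t)}$, every integral carrying the factor $\chi_{\calS_\eps(t)}$ is controlled by the a priori bounds above together with $|\calS_\eps(t)|\to0$, so only the pure fluid contributions survive and $u$ solves the Navier--Stokes equations in the sense of distributions. Finally \eqref{ene:ineq:plus:extra} follows by taking the $\liminf$ in \eqref{ene:ineq}: weak lower semicontinuity of $v\mapsto\int|v|^2$ and $v\mapsto\int|D(v)|^2$ bounds the left-hand side by the energy of $u$, while the right-hand side converges to $\int_{\bbR^d}|u^{in}|^2$ plus the persisting solid kinetic energy, which is exactly the excess term.

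The main obstacle is the strong $L^2_{loc}$-compactness of $\tilde u_\eps$. The weight $\tilde\rho_\eps$ does not converge in $L^1$, so compactness for a fixed measure is unavailable, and the rigidity constraint $D(\varphi)=0$ is imposed on the moving, $\eps$-dependent domain $\calS_\eps(t)$, so both the time-derivative estimate and the Aubin--Lions step live in a function space that changes with $\eps$. Reconciling this varying constraint with a single limit test space -- equivalently, ensuring that the solid momentum does not leak into the limit equation although its kinetic energy persists -- is where the heavy-body hypothesis $m_\eps/\eps^d\to+\infty$ is decisive and forms the crux of the argument.
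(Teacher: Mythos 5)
The first thing to say is that the paper does not prove this statement at all: Theorem \ref{theo:he} is quoted from the literature (Theorem 3 of \cite{He:2D} and Theorem 2 of \cite{He:3D}), and the present paper only records the uniform energy bound that those works start from before moving on to its own, different result (Theorem \ref{main:theo}). So there is no in-paper proof to compare yours against; the only meaningful comparison is with the strategy of the cited works, which your sketch does broadly follow (uniform bounds from \eqref{ene:ineq}, Korn, weak limits, correction of divergence-free test functions near the shrinking body, lower semicontinuity for \eqref{ene:ineq:plus:extra}).

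Judged as a proof, however, your proposal has a genuine gap, and it is exactly the one you flag yourself: the strong $L^2_{loc}$ compactness of $\tilde u_\eps$ and the passage to the limit in $\tilde\rho_\eps\,\tilde u_\eps\otimes\tilde u_\eps$ are announced as ``an Aubin--Lions--Simon argument'' in $\eps$-dependent constraint spaces, but no mechanism is given for making that work. In \cite{He:2D}--\cite{He:3D} the decisive device is the logarithmic divergence-free cut-off of the very type recalled in Section 2 of this paper: an arbitrary divergence-free $\varphi$ is corrected near $\calS_\eps(t)$ into an admissible test function that is rigid on the body, and the correction is harmless in the viscous and convective terms because its gradient is small in $L^d$ (of order $(\log\alpha_\eps)^{-(d-1)/d}$). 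It is this smallness that yields both the limit weak formulation and the time-equicontinuity needed for compactness; attributing the resolution of the compactness issue to the hypothesis $m_\eps/\eps^d\to+\infty$ is a misdiagnosis. That hypothesis is used where you first invoke it, namely to make the solid contribution to the $L^2$ norm vanish, and even there your intermediate step is not quite right: the two-sided comparison $\calJ_\eps\sim m_\eps\eps^2$ only holds with an additional lower bound on the density (only $\calJ_\eps\lesssim m_\eps\eps^2$ is automatic), which is why the cited works phrase their assumption as a lower bound on $\rho_\eps$ rather than on $m_\eps$ alone. As a summary of the known strategy your text is serviceable; as a standalone proof it leaves the central analytic step unproved.
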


Notice that there are many results in this direction. For example in \cite{LT} the authors studied the vanishing object problem in dimension two under the assumption $ m_{\eps} = \eps^2 m $. For viscous compressible fluid the three dimensional case was tackled in \cite{ION} and improved in \cite{FRZ2} and more recently in \cite{FRZ} was studied the two dimensional case for a weakly compressible fluid. Notice that for viscous compressible fluid the vanishing object problem in two dimensions is still an open problem due to the lack of integrability of the pressure.       

In the case of dimension two and for an inviscid incompressible fluid modelled by the Euler equations, i.e. system  \eqref{equ:fluid}-\eqref{equ:rigid:body}-\eqref{equ:in:cond} with $ \nu = 0 $, the vanishing object problem was studied in \cite{GMS}-\cite{GLS1}-\cite{GLS2}. In this case the presence of the small rigid body creates a vortex point in the limiting dynamics and the intensity is associated with the initial circulation around the object. In this case the author where able to determine the position of the rigid body in the limit and it coincides with the center of the vortex.

The goal of this paper is to study the evolution of the small rigid body in the limit as $\eps \lra 0 $ in the case the fluid is viscous and incompressible. In particular we will show that its dynamics is not influenced by the fluid.

\begin{Theorem}
\label{main:theo}
Let $ (u_{\eps}, \ell_{\eps}, \omega_{\eps}) $ a sequence of Hopf-Leray solutions associated with the initial data $ \calS^{in}_{\eps} $, $ \rho^{in}_{\eps} $, $ u^{in}_{\eps} $, $\ell^{in}_{\eps} $ and $ \omega^{in}_{\eps} $  satisfying the hypothesis \eqref{comp:cond} and such that $ \tilde{u}^{in }_{\eps} \in L^2(\bbR^d) $.  If to the assumptions of Theorem \ref{theo:he} we add 

\begin{itemize}

\item $ m_{\eps}/ \eps^{1/2} \lra +\infty $ for $ d = 3$ and $ m_{\eps} \geq C > 0 $ for $ d = 2$;

\item $ \ell_{\eps}^{in} \lra \ell^{in} $.

\end{itemize}  
Then 
\begin{equation*}
\ell_{\eps} \lra \ell^{in} \quad \text{ in } L^{\infty}_{loc}(\bbR^+).
\end{equation*}

\end{Theorem}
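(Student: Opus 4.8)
The plan is to control the body's acceleration through a duality argument and then integrate. By Newton's law $m_\eps\ell'_\eps(t) = -\oint_{\pS(t)}\Sigma(u_\eps,p_\eps)n\,ds$, so it suffices to show that $\int_0^t\ell'_\eps(s)\,ds\to0$ uniformly on compact intervals; since $\ell^{in}_\eps\to\ell^{in}$ this yields $\ell_\eps(t)\to\ell^{in}$. To avoid the pressure and the boundary integral, I would test the weak formulation \eqref{weak:form} against a divergence-free field $\Phi^i_\eps(t,\cdot)$ that equals the constant $e_i$ on a ball $B_{h_\eps(t)}(\eps)\supset\calS(t)$ and is cut off at a fixed scale $R=\calo(1)$ around $h_\eps(t)$. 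Because $e_i$ is a rigid (indeed translational) field, $D(\Phi^i_\eps)=0$ on $\calS(t)$, so $\Phi^i_\eps$ is admissible. The point of the construction is that on $\calS(t)$ one has $\tilde u_\eps = \ell_\eps + \omega_\eps\times(x - h_\eps)$ and $\int_{\calS(t)}\rho_\eps(\omega_\eps\times(x-h_\eps))\,dx = 0$ by definition of the center of mass, so the solid contributes exactly $m_\eps\,\ell_\eps(t)\cdot e_i$ to $\int_{\bbR^d}\tilde\rho_\eps\tilde u_\eps(t)\cdot\Phi^i_\eps(t)\,dx$.

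Feeding $\Phi^i_\eps$ into \eqref{weak:form} localised to $[0,t]$ then yields $m_\eps(\ell_\eps(t)-\ell^{in}_\eps)\cdot e_i$ as a sum of three remainders: the initial and terminal fluid parts $\int_{\calF_\eps}u_\eps\cdot\Phi^i_\eps$, the viscous term $2\nu\int_0^t\int D(\tilde u_\eps):D(\Phi^i_\eps)$, and the convective term. Each is estimated by Cauchy--Schwarz/H\"older against the $\eps$-uniform energy bounds $\|\tilde u_\eps\|_{L^\infty_tL^2}$ and $\|D(\tilde u_\eps)\|_{L^2_{t,x}}$ coming from \eqref{ene:ineq}, together with the interpolation bound on $\|u_\eps\|_{L^4}$ (Gagliardo--Nirenberg in $d=3$, Ladyzhenskaya in $d=2$). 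The decisive inputs are the corrector norms: the optimal extension satisfies $\|\Phi^i_\eps\|_{L^2}\lesssim\eps$ and $\|\nabla\Phi^i_\eps\|_{L^2}\lesssim\eps^{1/2}$ in $d=3$ (the harmonic capacity of a ball of radius $\eps$), while $\|\Phi^i_\eps\|_{L^2}+\|\nabla\Phi^i_\eps\|_{L^2}\lesssim|\log\eps|^{-1/2}$ in $d=2$ (the Stokes paradox costs only a logarithm on a truncated annulus). Dividing by $m_\eps$, every remainder is $\lesssim\eps^{1/2}/m_\eps$ in $d=3$ and $\lesssim(m_\eps\sqrt{|\log\eps|})^{-1}$ in $d=2$, and these tend to zero precisely under the hypotheses $m_\eps/\eps^{1/2}\to+\infty$ and $m_\eps\ge C>0$; the viscous term is the binding one, which is the sense in which the mass thresholds are sharp for this method.

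I expect the main obstacle to be that $\Phi^i_\eps$ must follow the moving body to remain rigid on $\calS(t)$, so it is genuinely time dependent with $\partial_t\Phi^i_\eps = -(\ell_\eps\cdot\nabla)\Phi^i_\eps$; the resulting contribution $\int_0^t\int_{\calF_\eps}u_\eps\cdot((u_\eps-\ell_\eps)\cdot\nabla)\Phi^i_\eps$ reintroduces $\ell_\eps$, which a priori is only bounded by the crude energy estimate $|\ell_\eps|\lesssim m_\eps^{-1/2}$ and may diverge in $d=3$. I would close this by a continuity/bootstrap argument on $M_\eps(t):=\sup_{[0,t]}|\ell_\eps|$: the estimates above produce a self-improving inequality of the form $|\ell_\eps(t)-\ell^{in}_\eps|\le\delta_\eps\big(1+\sqrt t\,(1+M_\eps(t))\big)$ with $\delta_\eps\to0$, so on a fixed $[0,T]$, for $\eps$ small the linear $M_\eps$ term is absorbed and $M_\eps$ stays uniformly bounded (which in turn keeps the supports $B_{h_\eps(t)}(R)$ inside a fixed compact set, removing the apparent circularity). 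Re-inserting the uniform bound gives $\sup_{[0,T]}|\ell_\eps-\ell^{in}_\eps|\le C\delta_\eps\to0$, whence $\ell_\eps\to\ell^{in}$ in $L^\infty_{loc}(\bbR^+)$. The remaining technical points --- the $\eps$-uniform Korn inequality on $\calF_\eps$ needed to pass from $D(\tilde u_\eps)$ to $\nabla u_\eps$, the explicit divergence-free construction of $\Phi^i_\eps$ realising the stated norms, and the justification of the time-localised identity via $\tilde u_\eps\in C_w(\bbR^+;L^2)$ --- are standard and I would treat them as in \cite{He:3D}.
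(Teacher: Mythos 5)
Your proposal is correct and follows essentially the same route as the paper: you test the weak formulation \eqref{weak:form} with a divergence-free corrector that equals a constant on the body and follows $h_\eps(t)$, extract $m_\eps\ell_\eps$ from the solid contribution, estimate the fluid remainders by the energy bounds against capacity-scaled corrector norms ($\eps^{1/2}/m_\eps$ in $d=3$, a negative power of the logarithm over $m_\eps$ in $d=2$), and absorb the transport term $-(\ell_\eps\cdot\nabla)\Phi^i_\eps$ by a smallness/bootstrap argument. The only differences are cosmetic: the paper cuts off at radius $2\eps$ (resp.\ $\alpha_\eps\eps$ with $\alpha_\eps\to\infty$) rather than at a fixed scale $R$, and replaces your time-localised identity and pointwise bootstrap by the duality characterisation of the $L^\infty$ norm with test functions $\frac{1}{m_\eps}\Psi_{\eps,\alpha_\eps}[\int_t^T\gamma]$, which yields the same absorption inequality.
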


Notice that in the above theorem $ \ell_{\eps} \lra \ell^{in} $ and   $  \ell^{in}  $ is independent of time. This means that the small rigid body in not accelerated by the fluid.

 The difficulty of the result is the fact that we allow the mass of the rigid body to go to zero, in particular it is not enough to show that in the limit $ m \ell' = 0 $ because $ m = 0 $. Moreover from the energy estimate we only deduce $ m_{\eps} |\ell_{\eps}|^2 $ uniformly bounded and this is not enough to have an \textit{a priori} bound on $ \ell_{\eps} $.

Finally notice that the evolution of the small rigid body seems richer in the case of a two dimensional inviscid incompressible fluid but this is due to the fact that in this setting it is possible to consider initial data with non-zero circulation around the object. In the case of zero circulation, the limit velocity of the center of mass of the small rigid body is trivial in the sense that $ \ell(t) = \ell^{in} $, see Section 1.4 of \cite{Munnier}. Notice that a velocity field that has non-zero circulation around the body behaves as $ 1/|x| $ when $ | x |  $ goes to $ +\infty $, in particular it is not $ L^2 $ so it does not enter in the theory of Hopf-Leray weak solutions, see also the comments in section 2 of \cite{LT}. Let recall that an existence result in this direction is available in \cite{IO2} where the author considers initial data for the velocity field of the type $ u^{in}+ x^{\perp}/|x^2| $ with $ u^{in} \in L^2(\calF^{in}) $ for the system   \eqref{equ:fluid}-\eqref{equ:rigid:body}-\eqref{equ:in:cond}. The vanishing rigid body problem is still open in this setting.  

Let now move to the proof of Theorem \ref{main:theo}. The remaining part of the paper is divided in two main sections. First of all we recall some useful cut off. In the second one we show the $ L^{\infty} $ convergence for the velocity of the center of mass. 
\section{Some appropriate cut-off}

In this section we introduce some cut-off that has been considered also in \cite{He:2D} and \cite{ION}. This cut-off has the property that they optimized the $ L^{d} $ norm of the gradient. First of all for $ A, B \in \R $ with $ 0 < A < B $, we denote by $ \alpha = B/ A > 1 $ and we define the functions
\begin{equation*}
f_{A,B}(z) = \begin{cases}
1 \quad & \text{ for } 0\leq z < A, \\
\frac{\log z - \log B }{\log A - \log B } \quad & \text{ for } A \leq z \leq B, \\
0 \quad & \text{ for } z > B.
\end{cases}
\end{equation*} 
It holds that $ f_{A,B} \in W^{1,\infty}(\R^{+}) $. We define the d-dimensional cut-off 
\begin{equation*}
\eta_{\eps, \alpha_{\eps}}(x) = f_{\eps, \alpha_{\eps} \eps }(|x|),  
\end{equation*} 
where $ \alpha_{\eps} $ will be choose appropriately.

\begin{Proposition}
Under the hypothesis that $ \eps \alpha_{\eps} \to 0$, it holds

\begin{enumerate}

\item The functions $  \eta_{\eps, \alpha_{\eps}} \longrightarrow 0 $  in $ L^{q}(\R^d) $ for $ 1 \leq q < +\infty $.

\item We have $$ \left\| \nabla \eta_{\eps, \alpha_{\eps}} \right\|_{L^{d}(\R^d)}^d = \frac{2^{d-1}\pi}{(\log \alpha_{\eps})^{d-1} }. $$

\item For $ 1\leq q < d $, $$ \left\| \nabla \eta_{\eps, \alpha_{\eps}} \right\|_{L^{q}(\R^3)}^q = \frac{2^{d-1}\pi}{d-q}  \frac{\alpha_{\eps}^{d-q}-1}{(\log \alpha_{\eps})^{q} } \eps^{d-q}. $$

\end{enumerate}

\end{Proposition}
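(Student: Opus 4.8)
The plan is to reduce everything to a one-dimensional radial computation. Since $\eta_{\eps,\alpha_\eps}(x) = f_{\eps,\alpha_\eps\eps}(|x|)$ is radial, the chain rule gives $\nabla \eta_{\eps,\alpha_\eps}(x) = f_{\eps,\alpha_\eps\eps}'(|x|)\, x/|x|$, so that $|\nabla \eta_{\eps,\alpha_\eps}(x)| = |f_{\eps,\alpha_\eps\eps}'(|x|)|$. Differentiating the middle branch of $f_{A,B}$ with $A = \eps$ and $B = \alpha_\eps \eps$ yields
\begin{equation*}
f_{\eps,\alpha_\eps\eps}'(z) = \frac{1}{z(\log \eps - \log(\alpha_\eps\eps))} = -\frac{1}{z\log\alpha_\eps} \quad \text{for } \eps < z < \alpha_\eps\eps,
\end{equation*}
while $f_{\eps,\alpha_\eps\eps}' = 0$ outside $[\eps, \alpha_\eps\eps]$. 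Thus $|\nabla \eta_{\eps,\alpha_\eps}|$ is supported in the annulus $\{\eps \le |x| \le \alpha_\eps\eps\}$ and equals $1/(|x|\log\alpha_\eps)$ there.

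For parts (2) and (3) I would pass to spherical coordinates. Writing $\sigma_{d-1}$ for the surface measure of the unit sphere in $\R^d$, one obtains
\begin{equation*}
\|\nabla \eta_{\eps,\alpha_\eps}\|_{L^q(\R^d)}^q = \sigma_{d-1}\int_\eps^{\alpha_\eps\eps} \frac{r^{d-1}}{(r\log\alpha_\eps)^q}\,dr = \frac{\sigma_{d-1}}{(\log\alpha_\eps)^q}\int_\eps^{\alpha_\eps\eps} r^{d-1-q}\,dr.
\end{equation*}
The key arithmetic fact is that, in the only dimensions relevant here, $\sigma_{d-1} = 2^{d-1}\pi$ (indeed $\sigma_1 = 2\pi$ and $\sigma_2 = 4\pi$). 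For $q = d$ the radial integral is logarithmic, $\int_\eps^{\alpha_\eps\eps} r^{-1}\,dr = \log\alpha_\eps$, which produces exactly $2^{d-1}\pi/(\log\alpha_\eps)^{d-1}$ and gives (2). For $1 \le q < d$ the exponent satisfies $d-1-q > -1$, so the integral is a genuine power, $\int_\eps^{\alpha_\eps\eps} r^{d-1-q}\,dr = \eps^{d-q}(\alpha_\eps^{d-q}-1)/(d-q)$, and substituting delivers the formula in (3).

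For part (1) I would not need the precise gradient at all: since $0 \le \eta_{\eps,\alpha_\eps} \le 1$ with support in the ball $\{|x| \le \alpha_\eps\eps\}$, for every $1 \le q < \infty$ we have $|\eta_{\eps,\alpha_\eps}|^q \le \chi_{\{|x|\le \alpha_\eps\eps\}}$, whence $\|\eta_{\eps,\alpha_\eps}\|_{L^q(\R^d)}^q \le c_d (\alpha_\eps\eps)^d$, with $c_d$ the volume of the unit ball. The hypothesis $\eps\alpha_\eps \to 0$ then forces the right-hand side to vanish, which gives the $L^q$ convergence to $0$.

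There is no genuine analytic difficulty here: the statement is a direct computation. The only points requiring care are (i) recognizing that the displayed constant $2^{d-1}\pi$ is the surface area of $S^{d-1}$ specifically for $d\in\{2,3\}$ (it is not the general-$d$ formula), and (ii) keeping the two regimes $q=d$ (logarithmic, scale-invariant) and $q<d$ (power, with the extra $\eps^{d-q}$ factor) separate. The logarithmic case is precisely what renders this cut-off optimal for the critical $L^d$ gradient norm, which is the motivation for introducing it.
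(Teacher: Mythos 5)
Your proposal is correct and follows the same route as the paper: the paper's proof also passes to spherical coordinates and computes the radial integrals directly (it carries out only part 2 in dimension three explicitly and cites Lemma 2 of \cite{He:2D} for dimension two). Your write-up is simply a more complete version of the same computation, treating both dimensions and all three items uniformly.
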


\begin{proof}
After passing to spherical coordinates the proof is straight-forward. For example to show part $ 2. $ in dimension three, we compute
\begin{align*}
 \left\| \nabla \eta_{\eps, \alpha_{\eps}} \right\|_{L^3(\bbR^3)}^3 = & \, \int_0^{2\pi} \int_{0}^{\pi} \int_{\eps}^{\eps \alpha_{\eps}} \left| \frac{1}{r} \frac{1}{\log (\eps) - \log(\eps \alpha_{\eps})}\right|^3 \sin(\varphi) r^2  \, dr d\varphi d\theta  \\
= & \, \frac{4 \pi}{(\log(\alpha_{\eps}))^3.}\left[\log(r)\right]_{\eps}^{\eps \alpha_{\eps}} \\
= & \, \frac{4 \pi}{(\log(\alpha_{\eps}))^2.}
\end{align*} 
See Lemma 2 of \cite{He:2D} for the proof in dimension two.
\end{proof}

In the same spirit as in \cite{He:2D}, we introduce a cut-off that follows the rigid body and that is divergence free. In this way we can use it as an admissible test function in \eqref{weak:form}.

In dimension $ d = 3 $, for $ x, z \in \bbR^3 $ we introduce
\begin{equation}
\label{cut:off:3D}
\Psi_{\eps, \alpha_{\eps}}[z](x) = \text{curl} \left( \eta_{\eps, \alpha_{\eps}}(x) \begin{pmatrix} z_2 x_3 \\ z_3 x_1 \\ z_1 x_2 \end{pmatrix} \right) . 
\end{equation}
Similarly  for $ x, z \in \bbR^2  $ we introduce
\begin{equation}
\label{cut:off:2D}
\Psi_{\eps, \alpha_{\eps}}[z](x) = \nabla^{\perp} \left( \eta_{\eps, \alpha_{\eps}}(x) (z_2 x_1 -z_1 x_2 ) \right) . 
\end{equation}
Notice that by definition $ \Psi_{\eps, \alpha_{\eps}}[z] $ is divergence free and equal to $ z $ in $ B_{\eps}(0) $.  Moreover they satisfy the same bounds of $ \eta_{\eps, \alpha_{\eps}} $.

\begin{Lemma}
\label{lem:conv}
Under the hypothesis that $ \eps \alpha_{\eps} \to 0$, it holds

\begin{enumerate}

\item For $ 1 \leq q < +\infty $, we have $$ \|\Psi_{\eps, \alpha_{\eps}}[z](x)  \| \leq C (\eps \alpha_{\eps})^{d/q}. $$ 

\item We have $$ \left\| \nabla \Psi_{\eps, \alpha_{\eps}}[z](x) \right\|_{L^{d}(\R^d)}^d \leq  C \frac{|z|}{(\log \alpha_{\eps})^{d-1} }. $$

\item For $ 1\leq q < d $, $$ \left\| \nabla \Psi_{\eps, \alpha_{\eps}}[z](x) \right\|_{L^{q}(\R^3)}^q = C\frac{|z|}{d-q}  \frac{\alpha_{\eps}^{d-q}-1}{(\log \alpha_{\eps})^{q} } \eps^{d-q}. $$

\end{enumerate}

\end{Lemma}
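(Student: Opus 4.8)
The plan is to reduce all three estimates to the bounds on $\eta_{\eps,\alpha_\eps}$ and $\nabla\eta_{\eps,\alpha_\eps}$ already recorded in the previous Proposition, exploiting that $\Psi_{\eps,\alpha_\eps}[z]$ is, away from the weight $\eta_{\eps,\alpha_\eps}$, a linear vector field. First I would expand the curl (resp.\ $\nabla^{\perp}$) by the Leibniz rule. In dimension three, writing $V(x)=(z_2 x_3,\, z_3 x_1,\, z_1 x_2)^T$, a direct computation gives $\text{curl}\,V = z$, so that using $\text{curl}(\eta V)=\eta\,\text{curl}\,V+\nabla\eta\times V$ one obtains $\Psi_{\eps,\alpha_\eps}[z]=\eta_{\eps,\alpha_\eps}\,z+\nabla\eta_{\eps,\alpha_\eps}\times V$. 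In dimension two, with $g(x)=z_2 x_1 - z_1 x_2$ one has $\nabla^{\perp} g = z$ and hence $\Psi_{\eps,\alpha_\eps}[z]=\eta_{\eps,\alpha_\eps}\,z+g\,\nabla^{\perp}\eta_{\eps,\alpha_\eps}$. In both cases $\Psi_{\eps,\alpha_\eps}[z]$ is supported in $B_{\eps\alpha_\eps}(0)$, it equals the constant $z$ on $B_{\eps}(0)$ (where $\eta_{\eps,\alpha_\eps}\equiv 1$ and $\nabla\eta_{\eps,\alpha_\eps}\equiv 0$), and on the annulus $\eps\le|x|\le\eps\alpha_\eps$ the auxiliary field obeys $|V|,|g|\le|z|\,|x|$.

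For part 1, I would use that on the annulus $|\nabla\eta_{\eps,\alpha_\eps}|=(|x|\log\alpha_\eps)^{-1}$, which follows from $f_{\eps,\eps\alpha_\eps}'(r)=-(r\log\alpha_\eps)^{-1}$. Then $|\nabla\eta_{\eps,\alpha_\eps}\times V|\le|z|\,|x|\,(|x|\log\alpha_\eps)^{-1}=|z|/\log\alpha_\eps$, and together with $|\eta_{\eps,\alpha_\eps} z|\le|z|$ this yields the pointwise bound $|\Psi_{\eps,\alpha_\eps}[z]|\le C|z|$ everywhere; the two-dimensional case is identical. Since the support is contained in $B_{\eps\alpha_\eps}(0)$, whose measure is of order $(\eps\alpha_\eps)^d$, integrating the $q$-th power gives $\|\Psi_{\eps,\alpha_\eps}[z]\|_{L^q}^q\le C|z|^q(\eps\alpha_\eps)^d$, which is the claimed estimate.

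For parts 2 and 3, I would differentiate the explicit expression. In dimension three $\nabla\Psi_{\eps,\alpha_\eps}[z]=z\otimes\nabla\eta_{\eps,\alpha_\eps}+\nabla(\nabla\eta_{\eps,\alpha_\eps}\times V)$, and the Leibniz rule splits the last term into one piece carrying $\nabla^2\eta_{\eps,\alpha_\eps}$ against $V$ and one piece carrying $\nabla\eta_{\eps,\alpha_\eps}$ against $\nabla V$; the two-dimensional computation is analogous with $\nabla^{\perp}$. On the annulus one has $|\nabla^2\eta_{\eps,\alpha_\eps}|\le C(|x|^2\log\alpha_\eps)^{-1}$ while $V$ is linear with $|V|\le|z|\,|x|$ and $|\nabla V|\le|z|$. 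Consequently every term is of the common order $|z|\,(|x|\log\alpha_\eps)^{-1}=|z|\,|\nabla\eta_{\eps,\alpha_\eps}|$, giving the pointwise bound $|\nabla\Psi_{\eps,\alpha_\eps}[z]|\le C|z|\,|\nabla\eta_{\eps,\alpha_\eps}|$. Raising to the power $q$, integrating, and inserting the explicit values of $\|\nabla\eta_{\eps,\alpha_\eps}\|_{L^d}^d$ and $\|\nabla\eta_{\eps,\alpha_\eps}\|_{L^q}^q$ from the Proposition then delivers parts 2 and 3, the homogeneity of $\Psi_{\eps,\alpha_\eps}[z]$ in $z$ supplying the dependence on $|z|$.

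The hard part will be controlling the $\nabla^2\eta_{\eps,\alpha_\eps}$ contribution in the gradient: taken by itself it is one power of $|x|$ more singular than $\nabla\eta_{\eps,\alpha_\eps}$, and it is only the linear growth of $V$ (resp.\ $g$) that compensates this extra singularity and brings the term back to the scale of $\nabla\eta_{\eps,\alpha_\eps}$. This is precisely the reason the cut-off is built as a curl (resp.\ $\nabla^{\perp}$) of $\eta_{\eps,\alpha_\eps}$ times a linear field rather than as the naive product $\eta_{\eps,\alpha_\eps}\,z$: the construction is engineered so that $\nabla\Psi_{\eps,\alpha_\eps}[z]$ inherits the optimized $L^d$ bound of $\nabla\eta_{\eps,\alpha_\eps}$, which is what makes it an admissible, divergence-free test function with negligible energy in the vanishing-body limit.
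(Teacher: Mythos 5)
Your proposal is correct and follows essentially the same route as the paper: both decompose $\Psi_{\eps,\alpha_{\eps}}[z]$ via the Leibniz rule into $\eta_{\eps,\alpha_{\eps}}z$ plus a term pairing $\nabla\eta_{\eps,\alpha_{\eps}}$ with the linear auxiliary field (the paper's matrices $G$, $\tilde G$ are just your $\nabla\eta\times V$ and $\nabla V$ terms written out), then use $|V|\le C|z|\,|x|$ together with the bounds $|\nabla\eta|\le C(|x|\log\alpha_{\eps})^{-1}$, $|\nabla^{2}\eta|\le C(|x|^{2}\log\alpha_{\eps})^{-1}$ to reduce everything to the integrals already computed for $\eta_{\eps,\alpha_{\eps}}$. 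Your observation that the linear growth of $V$ exactly compensates the extra singularity of $\nabla^{2}\eta_{\eps,\alpha_{\eps}}$ is precisely the mechanism the paper relies on, so there is no substantive difference.
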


\begin{proof}

Notice that  $ \Psi_{\eps, \alpha_{\eps}}[z] = \eta z + G \nabla \eta  $ and $ \nabla \Psi_{\eps, \alpha_{\eps}}[z]  = \nabla \eta + \tilde{G} \nabla \eta + G: \nabla^2 \eta $   where  
\begin{equation*}
G = \begin{pmatrix} 0 & z_1 x_2 & z_{3} x_1 \\ -z_1 x_2 & 0 & z_2 x_3 \\ -z_3 x_1 & -z_2 x_3 & 0 \end{pmatrix}  \quad \text{ and } \quad  \tilde{G} = \begin{pmatrix} 0 & z_1  & z_{3}  \\ -z_1  & 0 & z_2  \\ -z_3  & -z_2  & 0 \end{pmatrix}. 
\end{equation*}
Using that $ |G(r, \theta, \phi )| \leq C/r  $ where $ (r, \theta, \phi ) $ are polar coordinates, 
\begin{gather*}
|\nabla \eta_{\eps, \alpha_{\eps}}(r)| \leq C \frac{1}{r}\frac{1}{\log (\eps)- \log(\eps \alpha_{\eps})} \quad \text{ and } \quad |\nabla^2 \eta_{\eps, \alpha_{\eps}}(r)| \leq C \frac{1}{r^2}\frac{1}{\log (\eps)- \log(\eps \alpha_{\eps})} .
\end{gather*} 
After straight-forward integration we derive the proof.

\end{proof}

We use the special test functions \eqref{cut:off:3D} and \eqref{cut:off:2D} in equation \eqref{weak:form} to prove the main result.

\section{Proof of Theorem \ref{main:theo}}

In this section we will prove Theorem \ref{main:theo} in dimension two and three.  From the definition of Leray-Hopf weak solutions and in particular from \eqref{ene:ineq}, we have 
\begin{align*}
m_{\eps}  |\ell_{\eps}(t)|^2 + \left( \calJ_{\calS_{\eps}(t)} \omega_{\eps}(t)\right) \cdot \omega_{\eps}(t) +  \int_{\calF_{\eps}(t)} |u_{\eps}(t,.)|^2 \, dx + 4 \nu \int_{0}^t \int_{\bbR^d}& |D(u_{\eps})|^2\, dx dt \\ & \,  \leq \int_{\bbR^d} \tilde{\rho}^{in}_{\eps}|\tilde{u}^{in}_{\eps}|^2 \leq C,
\end{align*}
with $ C $ independent of $ \eps $. The above estimates was used in \cite{He:2D} and \cite{He:3D} to show Theorem \ref{theo:he}. This estimate does not give information on $ \ell_{\eps} $ in the case the mass of the rigid body converges to zero. We will now present a new estimates that gives us control of the $ L^{\infty}$-norm of $ \ell_{\eps} $.
Recall that that the $ L^{\infty} $-norm in the interval $ (0,T) $ can be define by 
\begin{equation*}
 \|f\|_{L^{\infty}(0,T)} = \sup_{ \gamma \in C^{\infty}_{c}(0,T)} \frac{\left| \int_{0}^T f \gamma \, d\tau \right|}{\|\gamma\|_{L^1(0,T)}}.
 \end{equation*}
We will use this form to show the results.

Given $ \gamma \in C^{\infty}_{c}([0,T))$, we consider the divergence free vector fields
\begin{equation*}
\psi_{\eps}(t,x) = \frac{1}{m_{\eps}}\Psi_{\eps, \alpha_{\eps}} \left[ \int_t^T \gamma(\tau) \, d\tau   \right](x-h_{\eps}(t))
\end{equation*}
for $ t \in [0,T] $ and $ \psi_{\eps} $ identically zero for $ t > T $.
First of all notice that for $ x \in \calS_{\eps}(t) $
\begin{equation*}
\psi_{\eps}(t,x) = \frac{1}{m_{\eps}}\Psi_{\eps , \alpha_{\eps}} \left[ \int_t^T \gamma(\tau) \, d\tau  \right](x-h_{\eps}(t)) =  \int_t^T \gamma(\tau) \, d\tau.
\end{equation*}
We deduce that 
\begin{align*}
- \int_{\bbR^+} \int_{\calS_{\eps}(t)} \rho u_{\calS} \cdot \partial_t \psi_{\eps} \, dx dt = \int_0^T \ell_{\eps} \cdot \gamma  \, dt,
\end{align*}
and similarly
\begin{equation*}
\int_{\calS_{\eps}(0)} \rho u_{\calS}^{in} \cdot \psi_{\eps}(0,.) \, dx = \ell_{\eps}^{in} \cdot \int_{0}^{T} \gamma(\tau) \, d\tau.
\end{equation*}

To show the $ L^{\infty} $ convergence of $ \ell_{\eps} $ is then enough to use the equation \eqref{weak:form} rewritten in the form
\begin{align}
\label{equ:to:est}
 \int_0^T (\ell_{\eps} - \ell^{in}) \cdot \gamma \, dt = & \,
\int_{0}^T \int_{\calF_{\eps}(t)}  u_{\eps} \cdot (\partial_t \psi_{\eps} + u_{\eps} \cdot \nabla \psi_{\eps} ) - 2 \nu D(u_{\eps}): D(\psi_{\eps}) \, dx \, dt \\ & \, + \int_{\calF_{\eps}(0)}  u^{in}_{\eps} \cdot \psi_{\eps}(0,.) \, dx + (\ell^{in}_{\eps}-\ell^{in})\cdot \int_{0}^{T} \gamma \, dt . \nonumber
\end{align}
In particular, if we show that the absolute values of the right hand side of \eqref{equ:to:est} is bounded by
\begin{equation*}
c(\eps) \|\gamma\|_{L^{1}(0,T)}
\end{equation*}
with $ c(\eps) \lra 0 $, then the convergence $ \ell_{\eps} \lra \ell^{in} $ follows from \eqref{equ:to:est}.

It remains to estimate the right hand side of the above inequality. Let start by computing for $ x \in \calF_{\eps}(t) $ 
\begin{align}
\label{expr:der:time}
\partial_t \psi_{\eps} = & \, \partial_t \left( \frac{1}{m_{\eps}}\Psi_{\eps, \alpha_{\eps}} \left[ \int_t^T \gamma (\tau) \, d\tau  \right](x-h_{\eps}(t)) \right) \\
= & \,  -\frac{1}{m_{\eps}}\Psi_{\eps, \alpha_{\eps}} \left[ \gamma(t)   \right](x-h_{\eps}(t)) - 	\ell_{\eps} \cdot \nabla  \left( \frac{1}{m_{\eps}}\Psi_{\eps} \left[ \int_t^T \gamma(\tau) \, d\tau  \right](x-h_{\eps}(t)) \right) \nonumber
\end{align}
At this step the proof in dimension two and three start to differ so let start by consider the case of dimension three.

\begin{proof}[Proof of Theorem \ref{main:theo} for $ d = 3 $.]
 In this case, we choose $ \alpha_{\eps} = 2 $. For simplicity we write $ \Psi_{\eps, 2} = \Psi_{\eps} $.
 Let estimate any term of the right hand side of \eqref{equ:to:est} separately.
 Using the computation \eqref{expr:der:time}, we deduce that 
 \begin{align*}
\left|  \int_{0}^T \int_{\calF_{\eps}(t)} u_{\eps} \cdot \partial_t \psi_{\eps} \, dx dt \right|  \leq & \,  \left| \int_{0}^T \int_{\calF_{\eps}(t)} u_{\eps} \cdot \frac{1}{m_{\eps}}\Psi_{\eps} \left[ \gamma(t)   \right](x-h_{\eps}(t)) \, dx dt  \right| \\
& \, +  \left|  \int_{0}^T \int_{\calF_{\eps}(t)} u_{\eps} \cdot \left[ \ell_{\eps} \cdot \nabla  \left( \frac{1}{m_{\eps}}\Psi_{\eps} \left[ \int_t^T \gamma(\tau) \, d\tau  \right](x-h_{\eps}(t)) \right) \right]	\, dx dt \right| \\
\leq & \, \|u_{\eps}\|_{L^{\infty}(0,T;L^{2}(\calF_{\eps}(t)))} \left\| \frac{\Psi_{\eps}}{m_{\eps}} \right\|_{L^1(0,T;L^{2}(\bbR^3))}	\\ 
& \, + \| u_{\eps} \|_{L^{\infty}(0,T;L^{2}(\calF_{\eps}(t)))} \left\|\frac{\nabla \Psi_{\eps}}{m_{\eps}} \right\|_{L^1(0,T;L^2(\bbR^3))} \left\| \int_0^T \ell_{\eps} \right\|_{L^{\infty}(0,T)} \\
\leq & \, \|u_{\eps}\|_{L^{\infty}(0,T;L^{2}(\calF_{\eps}(t)))} \|\Psi_{\eps}[1]\|_{L^{3}(\bbR^3)} \frac{\eps^{1/2}}{m_{\eps}} \|\ell_{\eps}\|_{L^1(0,T)} \\
& \, +   \|u_{\eps}\|_{L^{\infty}(0,T;L^{2}(\calF_{\eps}(t)))} \|\nabla \Psi_{\eps}[1]\|_{L^{3}(\bbR^3)} \frac{\eps^{1/2}}{m_{\eps}} T \|\ell_{\eps}\|_{L^{1}(0,T)} \|\gamma\|_{L^{1}(0,T)}. 
\end{align*}
Then we consider 
\begin{align*}
\left|  \int_{0}^T \int_{\calF_{\eps}(t)} u_{\eps} \cdot (u_{\eps} \cdot \nabla \psi_{\eps}) \, dx dt \right|  \leq & \, \|u_{\eps}\|_{L^2(0,T;L^{6}(\calF_{\eps}(t)))}^2 \left\| \frac{\nabla \Psi_{\eps}}{m_{\eps}} \right\|_{L^{\infty}(0,T;L^{3/2}(\calF_{\eps}(t)))}  \\
\leq & \, \|\tilde{u}_{\eps}\|_{L^2(0,T;L^{6}(\bbR^3))}^2 \|\nabla \Psi_{\eps}[1]\|_{L^{3}(\bbR^3)} \frac{\eps}{m_{\eps}} \|\gamma \|_{L^1(0,T)},
\end{align*}
where we used that $ \|\nabla \tilde{u}_{\eps} \|_{L^2(\bbR^3)} \leq C \| D(\tilde{u}_{\eps})\|_{L^2(\bbR^3)} $ by the Korn inequality to deduce a uniform bound on $ \| \tilde{u}_{\eps} \|_{L^{2}(0,T;L^{6}(\bbR^3)} $ by Sobolev embedding.

Similarly 
\begin{align*}
\left|  \int_{0}^T \int_{\calF_{\eps}(t)} D u_{\eps} \cdot D \psi_{\eps} \, dx dt \right|  \leq & \, \|\nabla u_{\eps}\|_{L^{2}(0,T;L^{2}(\calF_{\eps}(t)))} \left\| \frac{\Psi_{\eps}}{m_{\eps}} \right\|_{L^{2}(0,T;L^{2}(\calF_{\eps}(t)))}  \\
\leq & \, \| \nabla u_{\eps}\|_{L^2(0,T;L^{2}(\calF_{\eps}(t)))}^2 \|\nabla \Psi_{\eps}[1]\|_{L^{3}(\bbR^3)} \frac{\eps^{1/2}}{m_{\eps}} \sqrt{T} \|\gamma \|_{L^1(0,T)}.
\end{align*}

Finally for the initial data we 
\begin{align*}
\left|  \int_{\calF_{\eps}(0)}  u^{in} \cdot \psi_{\eps} \, dx \right| \leq & \, \left|  \int_{\calF(0)}  \frac{1}{m_{\eps}} u^{in}_{\eps} \cdot \Psi_{\eps}\left[ \int_0^T \ell_{\eps} \right]  \, dx \right|  \\
\leq & \, \|u^{in}_{\eps}\|_{L^{2}(\calF(0))}\|\Psi_{\eps}[1]\|_{L^{3}(\bbR^3)} \frac{\eps^{1/2}}{m_{\eps}} \|\gamma\|_{L^1(0,T)},
\end{align*}
and 
\begin{align*}
\left|  (\ell_{\eps}^{in}- \ell^{in}) \cdot \int_{0}^{T} \gamma \, d\tau \right| \leq & \, \left| \ell_{\eps}^{in} - \ell^{in} \right| \|\gamma\|_{L^1(0,T)}
\end{align*}
Putting all the above estimates together and using the uniform estimates \eqref{ene:ineq}, the convergence of the initial data and the hypothesis that $ m_{\eps}/ \eps^{1/2} \lra +\infty $, we deduce that 
\begin{align*}
\left|  \int_0^T (\ell_{\eps} - \ell^{in}) \cdot \gamma \, dt  \right| \leq \tilde{c}(\eps) \|\gamma\|_{L^{1}} + \bar{c}(\eps) \|\ell_{\eps}\|_{L^{1}(0,T)} \|\gamma\|_{L^{1}(0,T)},
 \end{align*}
where $ \tilde{c}(\eps), \bar{c}(\eps) \lra 0 $ and 
\begin{gather}
\bar{c}(\eps) = \|u_{\eps}\|_{L^{\infty}(0,T;L^{2}(\calF_{\eps}(t)))} \|\nabla \Psi_{\eps}[1]\|_{L^{3}(\bbR^3)} \frac{\eps^{1/2}}{m_{\eps}} T
\end{gather}
If we divide the right and left hand side by the $ L^1$ norm of $ \gamma $  and we take the $ \sup $ in $ \gamma \in C^{\infty}_{c}(0,T)$, we deduce 
\begin{equation*}
\|\ell_{\eps} - \ell^{in} \|_{L^{\infty}} \leq \tilde{c}(\eps) + \bar{c}(\eps) \|\ell_{\eps} - \ell^{in} \|_{L^{\infty}(0,T)} + \bar{c}(\eps) |\ell^{in}|.
\end{equation*}
We can now absorb the second term of the right hand side and deduce that $ \ell_{\eps} \lra \ell^{in} $ in $ L^{\infty} $.

\end{proof}

Let now move to the case of dimension two. In this case the the energy inequality \eqref{ene:ineq} implies that $ \ell_{\eps} $ is uniformly bounded in $ L^{\infty}(0,T) $ thanks to the hypothesis under $ m_{\eps} \geq C > 0 $  and the assumption on the initial data. It remains to show the strong convergence.

\begin{proof}[Proof of Theorem \ref{main:theo} for $ d = 2$.] Choose $ \alpha_{\eps} \lra + \infty $ and consider test functions
\begin{equation*}
 \psi_{\eps}(x,t) = \frac{1}{m_{\eps}}\Psi_{\eps, \alpha_{\eps}} \left[ \int_{t}^{T} \gamma \, d\tau \right](x-h_{\eps}(t)).
\end{equation*}
for $ \gamma : [0,T] \lra \bbR^2  $ with $ \gamma \in C^{\infty}_{c}((0,T))$.
Notice that \eqref{equ:to:est} holds also in the case of dimension two, in fact the same computations are true. We only need to show that the right hand side of \eqref{equ:to:est} converges to zero. 

Let estimate for example 
\begin{align*}
\left|  \int_{0}^T \int_{\calF_{\eps}(t)} u_{\eps} \cdot \partial_t \psi_{\eps} \, dx dt \right|  \leq & \,  \left| \int_{0}^T \int_{\calF_{\eps}(t)} u_{\eps} \cdot \frac{1}{m_{\eps}}\Psi_{\eps,  \alpha_{\eps}} \left[ \gamma(t)   \right](x-h_{\eps}(t)) \, dx dt  \right| \\
& \, +  \left|  \int_{0}^T \int_{\calF_{\eps}(t)} u_{\eps} \cdot \left[ \ell_{\eps} \cdot \nabla  \left( \frac{1}{m_{\eps}}\Psi_{\eps,  \alpha_{\eps}} \left[ \int_t^T \gamma \, d\tau  \right](x-h_{\eps}(t)) \right) \right]	\, dx dt \right| \\
\leq & \, \|u_{\eps}\|_{L^{\infty}(0,T;L^{2}(\calF_{\eps}(t)))} \left\| \frac{\Psi_{\eps,  \alpha_{\eps}}[1]}{m_{\eps}} \right\|_{L^{2}(\bbR^2)} \|\gamma \|_{L^1(0,T)} 	\\ 
& \, + \| u_{\eps} \|_{L^{\infty}(0,T;L^{2}(\calF_{\eps}(t)))} \left\|\frac{\nabla \Psi_{\eps,  \alpha_{\eps}}[1]}{m_{\eps}} \right\|_{L^2(\bbR^2)} \|\gamma \|_{L^1} \sqrt{T}\left\| \ell_{\eps} \right\|_{L^{1}(0,T)}. 
\end{align*}
As before we can absorb the last term and the other converges to zero because
\begin{equation*}
\left\| \frac{\Psi_{\eps,  \alpha_{\eps}}}{m_{\eps}} \right\|_{L^{2}(\bbR^2)} + \left\|\frac{\nabla \Psi_{\eps,  \alpha_{\eps}}}{m_{\eps}} \right\|_{L^2(\bbR^2)} \lra 0 
\end{equation*}
by the hypothesis $ m_{\eps} \geq C $ and $ \alpha_{\eps} \lra +\infty $ due to part $ 2. $ of Lemma \eqref{lem:conv}. The convergence to zero of the other terms of the right hand side of \eqref{equ:to:est} follows analogously. We conclude that $ \ell_{\eps} \lra \ell^{in} $ in $ L^{\infty}(0,T)$.

\end{proof}

\section*{Acknowledgements}

{\small M.B. is supported by the NWO grant OCENW.M20.194.  {\v S}. N.  has been supported by  Praemium Academiæ of \v S. Ne\v casov\' a and by the Czech Science Foundation (GA\v CR) project 22-01591S. The Institute of Mathematics, CAS is supported by RVO:67985840. }

\end{document}